\newtheorem{thm}{Theorem}
\newtheorem{prop}[thm]{Proposition}
\newtheorem{prob}{Problem}
\begin{document}

\title[Generation by conjugate elements]
{Generation by conjugate elements of finite almost simple groups with a sporadic socle}

\author{Danila O. Revin}%
\address{Danila O. Revin
\newline\indent Siberian Federal University,
\newline\indent 79, Svobodny av.
\newline\indent 660041, Krasnoyarsk, Russia
\newline\indent ORCID:\,0000-0002-8601-0706
} \email{revin@math.nsc.ru}

\author{Andrei V. Zavarnitsine}%
\address{Andrei V. Zavarnitsine
\newline\indent Sobolev Institute of Mathematics,
\newline\indent 4, Koptyug av.
\newline\indent 630090, Novosibirsk, Russia
\newline\indent ORCID:\,0000-0003-1983-3304
} \email{zav@math.nsc.ru}

\maketitle {\small
\begin{quote}
\noindent{\sc Abstract. } As defined by Guralnick and Saxl given a nonabelian simple group $S$ and its nonidentity automorphism $x$, a natural number $\alpha^{\phantom{S}}_{S}(x)$ does not exceed a natural number $m$ if some $m$ conjugates of $x$ in the group $\langle x,S\rangle$ generate a subgroup that includes~$S$. The outcome of this paper together with one by Di Martino, Pellegrini, and Zalesski, both of which
are based on computer calculations with character tables, is a refinement of the estimates by Guralnick and Saxl on the value of $\alpha^{\phantom{S}}_{S}(x)$ in the case where $S$~is a sporadic group. In particular, we prove that $\alpha^{\phantom{S}}_{S}(x)\leqslant 4$, except when $S$~is one of the Fischer groups and $x$~is a $3$-transposition. In the latter case, $\alpha^{\phantom{S}}_{S}(x)=6$ if $S$~is either $Fi_{22}$ or $Fi_{23}$ and $\alpha^{\phantom{S}}_{S}(x)=5$ if $S={Fi_{24}}'$.
\medskip

\noindent{\sc Keywords:} sporadic group, Fischer group, conjugacy, generators, Baer--Suzuki theorem.
 \end{quote}
}

\section{Introduction}

We only consider finite groups in this paper, so the word ``group'' will always mean a finite group.

Let $S$~be a nonabelian simple group, which we always identify with the subgroup  $\operatorname{Inn}(S)$ of inner automorphisms in the group $\operatorname{Aut}(S)$ of all automorphisms. Then $S$~is the unique minimal normal subgroup of every group $G$ such that $S\leqslant G\leqslant \operatorname{Aut}(S)$. In this case, $G$ is usually called an \emph{almost simple group with socle}~$S$. If $x\in \operatorname{Aut}(S)$~is a nonidentity (possibly, inner) automorphism then  the subgroup of $G=\langle x,S\rangle$ that is generated by the conjugacy class of~$x$ is normal in~$G$. Consequently, this subgroup includes $S$ and, therefore, coincides with~$G$.

In 2003, R.\,Guralnick and J.\,Saxl \cite{GS} introduced the notation   $$\alpha(x)=\alpha^{\phantom{S}}_{S}(x)$$ for the minimum number of elements conjugate to $x\ne1$ in $G=\langle x,S\rangle$ that generate~$G$. In other words, the parameter  $\alpha^{\phantom{S}}_{S}(x)$ is defined by the property that, for every natural $m$, $\alpha^{\phantom{S}}_{S}(x)\leqslant m$ if and only if some elements $x_1,\dots,x_m$ conjugate to~$x$ in~$G$ generate~$G$.  It is not difficult to see that if $y\ne 1$~is a power of~$x$ then, for all   $g_1,\dots , g_m\in G$, we have the inclusion
$$\langle y^{g_1},\dots, y^{g_m}\rangle\leqslant \langle x^{g_1},\dots, x^{g_m}\rangle.$$
Thus, if  $S\leqslant \langle y^{g_1},\dots, y^{g_m}\rangle$ then $S\leqslant \langle x^{g_1},\dots, x^{g_m}\rangle$ and so $\alpha^{\phantom{S}}_{S}(x)\leqslant \alpha^{\phantom{S}}_{S}(y)$. This means in particular that in order to find upper bounds on $\alpha^{\phantom{S}}_{S}(x)$ for a fixed $S$, it is sufficient to consider only elements~$x$ of prime order.

The main result of~\cite{GS} constitutes finding explicit, albeit not always best possible, upper bounds on $\alpha^{\phantom{S}}_{S}(x)$ for all nonabelian simple groups~$S$.
These bounds and their refinements have been extensively used in applications of the classification of finite simple groups. For example, they are substantially used in proofs of various analogues of the famous Baer-Suzuki theorem, see \cite{FGG,GGKP,GGKP1,GGKP2,Gu,GuestLevy,YRV,YWR,YWRV,WGR,RZ}.
For practical use, the estimates on $\alpha^{\phantom{S}}_{S}(x)$ from~\cite{GS} are not always sufficient. Refinements of these estimates for certain simple groups were required and obtained, for example, in \cite{WGR,RZ,DMPZ}.

In the case where $S$~is a sporadic simple group and $x$~is its nonidentity  \emph{inner} automorphism, nearly precise values of $\alpha^{\phantom{S}}_{S}(x)$ were found by
L.\,Di\,Mar\-tino, M.\,A.\,Pel\-legrini, and A.\,E.\,Za\-les\-ski in \cite[Theorem~3.1]{DMPZ}. All cases of imprecise estimates in this theorem are in the following list\footnote{Here and later on we use the notation and conventions from the Atlas of finite groups~\cite{atlas}; in particular, we may use the same symbol to denote both a group element and its conjugacy class.}:

\begin{enumerate}
    \item $(S,x)=(Fi_{22},2A)$ and $5\leqslant \alpha^{\phantom{S}}_{S}(x)\leqslant 6$;\label{item1}
    \item $(S,x)=(Fi_{23},2A)$ and $5\leqslant \alpha^{\phantom{S}}_{S}(x)\leqslant 6$;\label{item2}
    \item $(S,x)=(Fi_{22},3B)$ and $2\leqslant \alpha^{\phantom{S}}_{S}(x)\leqslant 3$;\label{item3}
    \item $(S,x)=(Suz,3A)$ and $3\leqslant \alpha^{\phantom{S}}_{S}(x)\leqslant 4$;
    \item $S=M$, $x$~is not an involution and $2\leqslant \alpha^{\phantom{S}}_{S}(x)\leqslant 3$;
    \item $S=M$, $x$~is an involution and $3\leqslant \alpha^{\phantom{S}}_{S}(x)\leqslant 4$.\label{item6}
\end{enumerate}

As a matter of fact, we can indicate the exact value of $\alpha^{\phantom{S}}_{S}(x)$ in cases \ref{item1}) and \ref{item2}) above. The Fischer groups $Fi_{22}$, $Fi_{23}$, and $Fi_{24}$~are so called {\em $3$-transposition groups}. Each of them is generated\footnote{$Fi_{24}$ is not simple, but its subgroup~${Fi_{24}}'$ of index~$2$ is. Therefore, the generating class of $3$-transpositions for $Fi_{24}$ lies in $Fi_{24}\setminus {Fi_{24}}'$.} by a conjugacy class  $D$ that is a class of \emph{$3$-transpositions}, i.\,e. consists of elements of order $2$ (\emph{involutions}) such that the product of every two of them has order $1$, $2$, or~$3$. The classes of $3$-transpositions in $Fi_{22}$, $Fi_{23}$, and $Fi_{24}={Fi_{24}}'.2$ are $2A$, $2A$, and $2C$, respectively, in the notation of \cite{atlas}. The groups of $3$-transpositions that can be generated by at most five $3$-transpositions were classified in 1995 by J.\,Hall and L.\,Soicher \cite[Theorems (1.1)--(1.3)]{HS}. This result and  S.\,Norton's paper \cite{N} imply that $F_{24}$ can be generated by five $3$-transpositions, whereas $Fi_{22}$ and $Fi_{23}$ cannot. Also, $Fi_{24}$ cannot be generated by four $3$-transpositions, see \cite[Theorem (1.1)]{HS}. Thus, the following assertion holds which in particular gives precise values for $\alpha^{\phantom{S}}_{S}(x)$ in cases \ref{item1}) and \ref{item2}) above.

\begin{prop}\label{Prop}
If $(S,x)\in\{(Fi_{22}, 2A),(Fi_{23}, 2A)\}$ then $\alpha^{\phantom{S}}_{S}(x)=6$. If  $(S,x)=({Fi_{24}}', 2C)$ then $\alpha^{\phantom{S}}_{S}(x)=5$.
\end{prop}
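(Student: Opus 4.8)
The plan is to reduce, in each case, the computation of $\alpha^{\phantom{S}}_{S}(x)$ to a statement about generating the relevant $3$-transposition group by elements of its $3$-transposition class $D$, and then to invoke the Hall--Soicher classification and Norton's computation together with the upper bounds of Di Martino, Pellegrini, and Zalesski recalled above. The key preliminary remark is that the full conjugacy class of $x$ in $G=\langle x,S\rangle$ is exactly $D$: for $S\in\{Fi_{22},Fi_{23}\}$ one has $G=S$ and the $G$-class of $x$ is $D=2A$, whereas for $S={Fi_{24}}'$ one has $G=Fi_{24}$ and the $G$-class of $x$ is $D=2C$. Thus, for every $m$, the inequality $\alpha^{\phantom{S}}_{S}(x)\leqslant m$ is equivalent to the existence of $m$ members of $D$ generating a subgroup that contains~$S$.

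First I would dispose of the cases $S\in\{Fi_{22},Fi_{23}\}$. Since here $D\subseteq S$, any subgroup generated by conjugates of $x$ lies in $S$ and contains $S$ if and only if it equals $S$; hence $\alpha^{\phantom{S}}_{S}(x)$ is precisely the least number of $3$-transpositions generating~$S$. By the cited results of Hall--Soicher and Norton, neither $Fi_{22}$ nor $Fi_{23}$ is generated by five $3$-transpositions, so $\alpha^{\phantom{S}}_{S}(x)\geqslant 6$; the matching bound $\alpha^{\phantom{S}}_{S}(x)\leqslant 6$ is given by cases \ref{item1}) and \ref{item2}) of the theorem of Di Martino, Pellegrini, and Zalesski. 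Together these yield $\alpha^{\phantom{S}}_{S}(x)=6$.

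The case $S={Fi_{24}}'$ carries the one genuinely delicate point, because now $x$ is an \emph{outer} automorphism, the transpositions lie in $Fi_{24}\setminus{Fi_{24}}'$, and the inner-automorphism bounds of Di Martino, Pellegrini, and Zalesski do not apply. Here I would observe that if $H=\langle x_1,\dots,x_m\rangle$ with all $x_i\in D$, then $H$ contains $x_1\notin{Fi_{24}}'$, so $H\not\leqslant{Fi_{24}}'$; as $[Fi_{24}:{Fi_{24}}']=2$, the inclusion ${Fi_{24}}'\leqslant H$ forces $H=Fi_{24}$. Consequently $\alpha^{\phantom{S}}_{S}(x)$ equals the least number of $3$-transpositions generating the whole group $Fi_{24}$, which by Hall--Soicher is five and not four; thus $\alpha^{\phantom{S}}_{S}(x)=5$.

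Because the hard classification input is imported wholesale from \cite{HS} and \cite{N}, the only real work is the translation between the two formulations, and the sole step requiring attention is the index-$2$ argument in the ${Fi_{24}}'$ case.
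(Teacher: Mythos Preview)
Your proof is correct and follows the same approach as the paper: translate $\alpha^{\phantom{S}}_{S}(x)$ into the minimal number of $3$-transpositions generating the relevant Fischer group, then invoke Hall--Soicher (and Norton) for the lower bounds and, in the $Fi_{22}$ and $Fi_{23}$ cases, \cite{DMPZ} for the upper bound~$6$. The only minor imprecision is in your final sentence for ${Fi_{24}}'$: the fact that five $3$-transpositions \emph{do} generate $Fi_{24}$ requires Norton's paper \cite{N} in addition to Hall--Soicher, as the paper notes.
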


As we have already mentioned, only inner automorphisms of sporadic groups were considered in \cite{DMPZ}. For automorphisms in  $\operatorname{Aut}(S)\setminus S$, where $S$ is a sporadic group, only the estimates from~\cite{GS} are known. If $\operatorname{Aut}(S)\ne S$ and $x\in \operatorname{Aut}(S)\setminus S$~is of prime order then $S$ is as given in the first column of Table~\ref{Spor}, $x$~is an involution whose conjugacy class is given in the third column, and the estimate on $\alpha^{\phantom{S}}_{S}(x)$ from ~\cite{GS} is given in the fourth column. Observe that since two involutions always generate a solvable group, we have $3\leqslant \alpha^{\phantom{S}}_{S}(x)$ for all cases included in Table~\ref{Spor}.

{\small
\begin{center}
\begin{table}
\caption{Sporadic groups $S$ with $\operatorname{Aut}(S)\ne S$}\label{Spor}
\begin{tabular}{|l|c|c|c|}\hline
\multirow{2}{*}{~~$S$} & \multirow{2}{*}{$|S|$} & classes of involutions \rule{0pt}{2.3ex} & \multirow{2}{*}{$\alpha^{\phantom{S}}_{S}(x)\leqslant $}\\
&& $x$ in $\operatorname{Aut}(S)\setminus S$ \rule[-0.9ex]{0pt}{0pt} &\\
\hline\hline
$M_{12}$ & $2^6\cdot 3^3\cdot 5\cdot 11$ \rule{0pt}{2.5ex} & $2C$ &  4  \\
$M_{22}$ & $2^7\cdot 3^2\cdot 5\cdot 7\cdot 11$ & $2B$, $2C$ &  4  \\
$J_{2}$ & $2^7\cdot 3^3\cdot 5^2\cdot 7$ & $2C$ &  4  \\
$J_{3}$ & $2^7\cdot 3^5\cdot 5\cdot 17\cdot 19$ & $2B$ &  4  \\
$McL$ & $2^7\cdot 3^6\cdot 5^3\cdot 7\cdot 11$ & $2B$ &  4  \\
$O'N$ & $2^9\cdot 3^4\cdot 5\cdot 7^3\cdot 11\cdot 19\cdot 31$ & $2B$ &  4  \\
$HS$ & $2^9\cdot 3^2\cdot 5^3\cdot 7\cdot 11$ & $2C$, $2D$ &  5  \\
$He$ & $2^{10}\cdot 3^3\cdot 5^2\cdot 7^3\cdot 17$ & $2C$&  5  \\
$Suz$ & $2^{13}\cdot 3^7\cdot 5^2\cdot 7\cdot 11\cdot 13$ & $2C$, $2D$&  5  \\
$HN$ & $2^{14}\cdot 3^6\cdot 5^6\cdot 7\cdot 11\cdot 19$ & $2C$ &  5  \\
$Fi_{22}$ & $2^{17}\cdot 3^9\cdot 5^2\cdot 7\cdot 11\cdot 13$ & $2D$, $2E$, $2F$ &  7  \\
${Fi_{24}}'$ & $2^{21}\cdot 3^{16}\cdot 5^5\cdot 7^3\cdot 11\cdot 13\cdot 23\cdot 29$ & $2C$, $2D$ &  8  \\
\hline
\end{tabular}
\end{table}
\end{center}
}

The main result of this paper is as follows.

\begin{thm}\label{theo1}
Let $x\in\operatorname{Aut}(S)\setminus S$~be an automorphism of prime order of a sporadic group~$S$. Then $3\leqslant \alpha^{\phantom{S}}_{S}(x)\leqslant 4$, except when $(S,x)=({Fi_{24}}', 2C)$ and $\alpha^{\phantom{S}}_{S}(x)=5$.
\end{thm}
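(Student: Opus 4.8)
The plan is to separate the elementary lower bound from the substantive upper bound, and to dispose of the single exceptional pair at once. Every automorphism of prime order in $\operatorname{Aut}(S)\setminus S$ occurring in Table~\ref{Spor} is an involution, so $x$ has order $2$ and the inequality $3\leqslant\alpha^{\phantom{S}}_{S}(x)$ already holds in all cases by the remark preceding the table: two involutions generate a dihedral, hence solvable, group, which cannot contain the nonabelian simple group~$S$. For the pair $(S,x)=({Fi_{24}}',2C)$ the class $2C$ is precisely the class of $3$-transpositions of $Fi_{24}=\operatorname{Aut}({Fi_{24}}')$ lying in $Fi_{24}\setminus{Fi_{24}}'$, so Proposition~\ref{Prop} already yields $\alpha^{\phantom{S}}_{S}(x)=5$. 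It therefore remains to prove that $\alpha^{\phantom{S}}_{S}(x)\leqslant 4$ for every remaining pair in Table~\ref{Spor}; that is, to exhibit four conjugates of the outer involution~$x$ generating $G=\langle x,S\rangle=S.2$.

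The observation that turns this into a purely character-theoretic task is that, since every conjugate of~$x$ lies in $G\setminus S$, a subgroup $H=\langle x_1,x_2,x_3,x_4\rangle$ generated by four such conjugates contains $S$ if and only if $H=G$: indeed $H\not\leqslant S$, so $S\leqslant H$ forces $H=\langle S,x_1\rangle=G$, while conversely $H<G$ means $H$ lies in some maximal subgroup of~$G$. Hence it suffices to show that not all quadruples from $C=x^{G}$ lie in proper subgroups, and I would count quadruples by their product. For a class~$D$ of~$G$ with representative~$d$, the Frobenius class-algebra formula gives the number of $(x_1,x_2,x_3,x_4)\in C^4$ with $x_1x_2x_3x_4=d$ as
$$a_G(d)=\frac{|C|^4}{|G|}\sum_{\chi\in\operatorname{Irr}(G)}\frac{\chi(x)^4\,\overline{\chi(d)}}{\chi(1)^{3}},$$
and the analogous expression over a maximal subgroup~$M$, with $C$ replaced by $C\cap M$ and $\operatorname{Irr}(G)$ by $\operatorname{Irr}(M)$, counts the quadruples contained in~$M$ with product~$d$.

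Next I would run the standard maximality sieve. A non-generating quadruple with product~$d$ lies in some maximal subgroup $M\ni d$, so the number of such quadruples is at most $b(d)=\sum_{[M]}k_M(d)\,a_M(d)$, the sum being over representatives of the $G$-classes of maximal subgroups, $k_M(d)$ denoting the number of conjugates of~$M$ containing~$d$. Choosing the target class~$D$ to consist of elements of large order — so that $d$ lies in as few maximal subgroups as possible and $C\cap M$ is small or empty for most of them — one aims to verify the strict inequality $a_G(d)>b(d)$ for at least one class~$D$; this immediately produces a generating quadruple and hence $\alpha^{\phantom{S}}_{S}(x)\leqslant 4$. All the required data — the character tables of $G=S.2$ and of its maximal subgroups, together with the class fusion maps $C\cap M\hookrightarrow C$ and $M\hookrightarrow G$ — are available from the \textsc{Atlas}~\cite{atlas} and the associated character-table library, so each case reduces to a finite computation, and for the smaller groups in Table~\ref{Spor} (from $M_{12}.2$ up to $HN.2$) the class $C$ is small, the maximal subgroups are well understood, and the sieve inequality holds with room to spare.

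The main obstacle is the two largest groups, $G=Fi_{22}.2$ (outer classes $2D,2E,2F$) and $G=Fi_{24}$ (outer class $2D$), where the number of classes of maximal subgroups is large and some of them are sizeable almost simple groups whose fusion into~$G$ is delicate. Here the work lies in selecting a target class~$D$ of sufficiently large element order that the list of maximal subgroups~$M$ meeting both $C$ and $D$ is short, so that $b(d)$ can be bounded and $a_G(d)>b(d)$ forced. Should no single class~$D$ suffice, one can combine the contributions of two target classes, or exploit that each partial product $x_ix_j$ lies in~$S$ and feed the resulting inner elements into the inner-automorphism data of Di~Martino--Pellegrini--Zalesski~\cite{DMPZ} to complete the generation argument inside~$S$.
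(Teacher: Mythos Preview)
Your handling of the lower bound and of the exceptional pair $({Fi_{24}}',2C)$ matches the paper. You overlook, however, that the first six rows of Table~\ref{Spor} already carry the Guralnick--Saxl estimate $\alpha^{\phantom{S}}_{S}(x)\leqslant 4$, so only the pairs with $S\in\{HS,He,Suz,HN,Fi_{22}\}$ and $(S,x)=({Fi_{24}}',2D)$ require any argument at all.

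For these, your primary plan --- a maximality sieve comparing the Frobenius count of quadruples in $C^4$ with fixed product against the quadruples trapped in maximal subgroups --- is sound in principle but is neither the paper's route nor actually carried out: for $Fi_{22}.2$ and especially $Fi_{24}$ the maximal-subgroup lists and fusion data are substantial, and you yourself flag this as the main obstacle. The paper instead promotes to its main argument precisely what you mention only as a last-resort fallback. Since $O_2(G)=1$, the Baer--Suzuki theorem produces a conjugate $x^t$ with $\langle x,x^t\rangle$ not a $2$-group, hence containing some $y\in S$ of odd order; whenever $\alpha^{\phantom{S}}_{S}(y)=2$, a suitable $g$ gives $S=\langle y,y^g\rangle\leqslant\langle x,x^t,x^g,x^{tg}\rangle$ and thus $\alpha^{\phantom{S}}_{S}(x)\leqslant 4$. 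By \cite[Theorem~3.1]{DMPZ} this settles $HS$, $He$, $HN$ immediately, and for $Suz$, $Fi_{22}$, ${Fi_{24}}'$ one need only arrange that $y$ avoids the classes $3A$ and $3B$; a single structure-constant check $\mathrm{m}(x,x,3C)>0$ per outer class (six values, tabulated in the paper) suffices. So the idea you defer to a contingency is the one that actually closes the proof with a handful of character-table evaluations, whereas your headline approach remains an unexecuted outline whose hardest cases would demand the full maximal-subgroup apparatus of $Fi_{24}$.
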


Combining this result with \cite[Theorem~3.1]{DMPZ} and Proposition~\ref{Prop} we will also prove the following assertion which includes the cases of inner and outer automorphisms of prime and composite order.

\begin{thm}\label{theo2}
Let $x\in\operatorname{Aut}(S)$~be a nonidentity automorphism of a sporadic group~$S$. Then $\alpha^{\phantom{S}}_{S}(x)\leqslant 4$, except in the following cases:
\begin{enumerate}
    \item[$1)$] $(S,x)=(Fi_{22}, 2A)$ and $\alpha^{\phantom{S}}_{S}(x)=6$;
    \item[$2)$] $(S,x)=(Fi_{23}, 2A)$ and $\alpha^{\phantom{S}}_{S}(x)=6$;
    \item[$3)$] $(S,x)=({Fi_{24}}', 2C)$ and $\alpha^{\phantom{S}}_{S}(x)=5$.
\end{enumerate}
\end{thm}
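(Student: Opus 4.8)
The plan is to deduce Theorem~\ref{theo2} by combining Theorem~\ref{theo1}, \cite[Theorem~3.1]{DMPZ}, and Proposition~\ref{Prop} through the monotonicity observation from the introduction: if $y\ne1$ is a power of~$x$, then $\alpha^{\phantom{S}}_{S}(x)\leqslant\alpha^{\phantom{S}}_{S}(y)$. Since every sporadic group satisfies $|\operatorname{Out}(S)|\leqslant2$, an outer automorphism maps onto the generator of $\operatorname{Aut}(S)/S$, so its order is even and, in particular, an outer automorphism of prime order is an involution. This lets me split the argument into three cases according to whether $x$ is inner, an outer involution, or outer of composite order, and in each case I will either quote one of the three results directly or reduce to them via a suitable power of~$x$.

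For $x$ inner, \cite[Theorem~3.1]{DMPZ} gives $\alpha^{\phantom{S}}_{S}(x)\leqslant4$ outside the list \ref{item1})--\ref{item6}); among those cases the upper bound exceeds~$4$ only for $(Fi_{22},2A)$ and $(Fi_{23},2A)$, where Proposition~\ref{Prop} pins the value down to~$6$. This disposes of all inner automorphisms (of any order) and produces exactly the exceptional cases~$1)$ and~$2)$ of the theorem. For $x$ an outer involution, Theorem~\ref{theo1} gives $\alpha^{\phantom{S}}_{S}(x)\leqslant4$ except for $({Fi_{24}}',2C)$, where $\alpha^{\phantom{S}}_{S}(x)=5$, which is exactly case~$3)$.

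It remains to treat $x$ outer of composite (hence even, non-prime) order~$n$, where I must establish the unconditional bound $\alpha^{\phantom{S}}_{S}(x)\leqslant4$. If $n$ has an odd prime divisor~$p$, then $y=x^{n/p}$ has odd order~$p$ and, since $n/p$ is even, is inner; as no odd-order inner class occurs among the cases with bound exceeding~$4$, monotonicity gives $\alpha^{\phantom{S}}_{S}(x)\leqslant\alpha^{\phantom{S}}_{S}(y)\leqslant4$. If instead $n=2^a$ with $a\geqslant2$, I pass to $y=x^2$, an inner element of order $2^{a-1}$; when $a\geqslant3$ this $y$ is a non-involution and \cite[Theorem~3.1]{DMPZ} again yields $\alpha^{\phantom{S}}_{S}(y)\leqslant4$, while for $a=2$ the element $y=x^2$ is an inner involution and the bound $\alpha^{\phantom{S}}_{S}(y)\leqslant4$ holds unless $(S,y)\in\{(Fi_{22},2A),(Fi_{23},2A)\}$.

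The main obstacle is therefore the single residual configuration left by this reduction: an outer element $x$ of order~$4$ whose square lies in an exceptional inner involution class, for then monotonicity only gives $\alpha^{\phantom{S}}_{S}(x)\leqslant6$. Here I would note that $Fi_{23}$ has no outer automorphisms at all (it is absent from Table~\ref{Spor}) and that the exceptional class $2C$ of ${Fi_{24}}'$ is \emph{outer}, hence cannot be the square $x^2$ of an outer element; so the only configuration that can actually arise is $S=Fi_{22}$ with $x$ outer of order~$4$ and $x^2\in2A$. I would settle this last case by inspecting the power map of $Fi_{22}.2$ in~\cite{atlas} to determine which outer classes of order~$4$ square into~$2A$, and, for each that does, verifying directly by the same computation with conjugacy classes as in Theorem~\ref{theo1} that four conjugates of such an $x$ already generate a subgroup containing~$S$, so that $\alpha^{\phantom{S}}_{S}(x)\leqslant4$ and no further exception appears.
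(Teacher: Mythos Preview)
Your proposal is correct and follows essentially the same route as the paper: reduce via monotonicity to prime-order powers, quote \cite[Theorem~3.1]{DMPZ}, Proposition~\ref{Prop}, and Theorem~\ref{theo1} for those, and then isolate the residual configuration of an outer $2$-element whose involution power lies in a bad class, finishing with the power map in the Atlas.

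Two minor points of comparison. First, your parity observation that the square of an outer element is inner, hence cannot lie in the \emph{outer} class $2C$ of ${Fi_{24}}'$, is a clean shortcut; the paper instead keeps ${Fi_{24}}'$ in the residual list and simply reports that the power-map check in ${Fi_{24}}'.2$ rules it out along with $Fi_{22}.2$. Second, when you do carry out the power-map inspection in $Fi_{22}.2$ you will find (as the paper states) that \emph{no} element of order~$4$ squares into $2A$, so your proposed backup computation ``for each that does'' is never triggered and the proof ends right there.
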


The following problem still remains open.
\begin{prob}
Find ~$\alpha^{\phantom{S}}_{S}(x)$ for every sporadic simple group~$S$ and its non\-identity automorphism~$x$.
\end{prob}

In order to solve this problem, we have to determine the precise value of $\alpha^{\phantom{S}}_{S}(x)$ in the above-mentioned cases \ref{item3})\,--\,\ref{item6}), where \cite[Theorem~3.1]{DMPZ} does not give such a  value, as well as determine
whether $\alpha^{\phantom{S}}_{S}(x)$ equals $3$ or $4$ for $S$ and $x$ from Table \ref{Spor}, except $(S,x)=({Fi_{24}}',2C)$, where we know that~$\alpha^{\phantom{S}}_{S}(x)=5$.

\section{Proof of Theorems~\ref{theo1} and~\ref{theo2} }

\begin{proof}[Proof of Theorem~\ref{theo1}] As we have already mentioned, the inequality   $3\leqslant \alpha^{\phantom{S}}_{S}(x)$ for an involution $x$ follows from the fact that every two involutions generate a solvable group \cite[Lemma~2.14]{Isaacs1}.

Taking account of Proposition~\ref{Prop} and the data in Table~\ref{Spor},
we will prove Theorem~\ref{theo1} once we establish that $\alpha^{\phantom{S}}_{S}(x)\leqslant 4$ when   $(S,x)$ is in the following list:
\begin{multline*} (HS,2C),(HS,2D), (He,2C), (Suz,2C),(Suz,2D),(HN,2C),\\ (Fi_{22},2D),(Fi_{22},2E),(Fi_{22},2F), ({Fi_{24}}',2D).
\end{multline*}

Since $O_2(G)=1$ for $G=\langle x,S\rangle$, the Baer--Suzuki theorem \cite[Theorem~2.12]{Isaacs1} implies that~$x$, together with some conjugate~$x^t$, generates a subgroup containing a nonidentity element~$y$ of odd order. Also,  $y\in S$, because $|G:S|=2$. If
$\alpha^{\phantom{S}}_{S}(y)=2$ then by definition ~$S$ contains an element $g\in G$ such that
$$S=\langle y, y^g\rangle\leqslant \langle x,x^t, x^g,x^{tg}\rangle.$$
This will imply the inequality ${\alpha^{\phantom{S}}_{S}(x)\leqslant 4}$ and the claim will follow.

In view of \cite[Theorem~3.1]{DMPZ}, we have $\alpha^{\phantom{S}}_{S}(y)=2$ for a nonidentity $y\in S$ of odd order, where $(S,x)$ is from the above list, unless
$$(S,y)\in\{(Suz,3A),(Fi_{22},3A),(Fi_{22},3B),({Fi_{24}}',3A),({Fi_{24}}',3B)\}.$$
In particular, $\alpha^{\phantom{S}}_{S}(y)=2$ if $S$~is one of $HS$, $He$, or $HN$.

In the remaining part of the proof, we are going to establish that even when $S$~is one of the groups $Suz$, $Fi_{22}$, or ${Fi_{24}}'$, the conjugate $x^t$ can be chosen so that the product $y=xx^t$ would be of odd order and belong to neither class $3A$ nor $3B$. Under this choice, we have $\alpha^{\phantom{S}}_{S}(y)=2$ as required.

We use the known fact from character theory that,
given elements $a,b$ and $c$ of a group $G$, the number $\mathrm{m}(a,b,c)$ of pairs
$(u,v)$, where $u$ is conjugate to $a$, $v$ is conjugate to $b$, and
$uv=c$, can be found from the character table using the formula
$$
\mathrm{m}(a,b,c)=\frac{|G|}{|\operatorname{C}_G(a)||\operatorname{C}_G(b)|}\sum\limits_{\chi\in\mathrm{Irr}(G)}\frac{\chi(a)\chi(b)\overline{\chi(c)}}{\chi(1)},
$$
see \cite[Exercise~(3.9), p.~45]{Isaacs}. To make sure that $x$ and some of its conjugates generate a subgroup containing a conjugate of~$y$, it is sufficient to show that $\mathrm{m}(x,x,y)>0$. We use the character tables of $Suz.2$, $Fi_{22}.2$, and ${Fi_{24}}'.2$ available in both \cite{atlas} and the computer algebra system \texttt{GAP} \cite{GAP}. The remaining cases can be treated using the \texttt{GAP} function

\medskip
\noindent
\texttt{> ClassMultiplicationCoefficient()}

\medskip
\noindent
to calculate $\mathrm{m}(x,x,y)$ for $x$ and $y$ listed in Table \ref{tab:cl_mult}. It turns out that in all these cases $y$ can be chosen from the class denoted by~$3C$ in~\cite{atlas}.
The proof of Theorem~\ref{theo1} is complete.\end{proof}

\begin{center}
\begin{table}\caption{Some class multiplication coefficients}
    \label{tab:cl_mult}
    \begin{tabular}{|c|l|}
        \hline
        $(S,x)$ \rule[-0.9ex]{0pt}{3.3ex} & $\mathrm{m}(x,x,y)$ \\
        \hline
        $(Suz,2C)$ & $\mathrm{m}(2C,2C,3C)=45$  \rule{0pt}{2.3ex} \\
        $(Suz,2D)$ & $\mathrm{m}(2D,2D,3C)=45$ \\
        $(Fi_{22},2D)$ & $\mathrm{m}(2D,2D,3C)=3$ \\
        $(Fi_{22},2E)$ & $\mathrm{m}(2E,2E,3C)=729$ \\
        $(Fi_{22},2F)$ & $\mathrm{m}(2F,2F,3C)=1080$ \\
        $({Fi_{24}}',2D)$ & $\mathrm{m}(2D,2D,3C)=1224720$ \rule[-0.9ex]{0pt}{0pt}\\
        \hline
    \end{tabular}
\end{table}
\end{center}

\bigskip
\begin{proof}[Proof of Theorem~\ref{theo2}]
For inner nonidentity automorphisms of $S$, the claim holds in view of~\cite[Theorem~3.1]{DMPZ} and Proposition~\ref{Prop}. For elements of prime order in ${\operatorname{Aut}(S)\setminus S}$, the claim holds by Theorem~\ref{theo1}. It remains to consider the elements of composite order in the difference ${\operatorname{Aut}(S)\setminus S}$. Let $x$~be such an element. If $y$ is a power of $x$, $|y|$ is prime, and $(S,y)\notin\{(Fi_{22},2A),({Fi_{24}}',2C)\}$ (for example, when $x$ is not a $2$-element), then $$\alpha^{\phantom{S}}_{S}(x)\leqslant\alpha^{\phantom{S}}_{S}(y)\leqslant 4,$$
and the claim holds for $x$. Consequently, we may assume that $S$~is either  $Fi_{22}$ or ${Fi_{24}}'$, $x$~is a $2$-element, and $y$ is the power of $x$ of order $2$ in class $2A$ if $S=Fi_{22}$ and in class $2C$ if $S={Fi_{24}}'$. Then either $x$ or some power of $x$ has order $4$. The character tables of $Fi_{22}.2$ and ${Fi_{24}}'.2$ from \cite{GAP} contain information about which  conjugacy classes the prime powers of elements belong to. This information implies that  both $Fi_{22}.2$ and ${Fi_{24}}'.2$ have no elements of order $4$ whose squares would belong to classes $2A$ or~$2C$. The proof is complete.
\end{proof}

The authors are grateful to Prof. Andrey S. Mamontov for helpful consultations.

\medskip

{\em Acknowledgement.}\/ The work of the first author was financially supported by the Russian Science Foundation (Grant 19-71-10017-$\Pi$). The work of the second author was financially supported by the State Contract of the Sobolev Institute of Mathematics  (Project FWNF-2022-0002).

\end{document}